\newcommand{\mbfY}{\mathbf{Y}}
\newcommand{\mbfZ}{\mathbf{Z}}
\newcommand{\mbfU}{\mathbf{U}}
\newcommand{\mbfV}{\mathbf{V}}
\newcommand{\mbfQ}{\mathbf{Q}}
\newcommand{\mbfP}{\mathbf{P}}
\newcommand{\mbfA}{\mathbf{A}}
\newtheorem{assumption}{Assumption}
\def\UrlSpecials{\do\~{\kern -.15em\lower .7ex\hbox{~}\kern .04em}} \catcode`~=13 
\DeclareMathAlphabet{\mathbsf}{OT1}{cmss}{bx}{n}
\DeclareMathAlphabet{\mathssf}{OT1}{cmss}{m}{sl}
\DeclareSymbolFont{bsfletters}{OT1}{cmss}{bx}{n}  
\DeclareSymbolFont{ssfletters}{OT1}{cmss}{m}{n}
\DeclareMathSymbol{\bsfGamma}{0}{bsfletters}{'000}
\DeclareMathSymbol{\ssfGamma}{0}{ssfletters}{'000}
\DeclareMathSymbol{\bsfDelta}{0}{bsfletters}{'001}
\DeclareMathSymbol{\ssfDelta}{0}{ssfletters}{'001}
\DeclareMathSymbol{\bsfTheta}{0}{bsfletters}{'002}
\DeclareMathSymbol{\ssfTheta}{0}{ssfletters}{'002}
\DeclareMathSymbol{\bsfLambda}{0}{bsfletters}{'003}
\DeclareMathSymbol{\ssfLambda}{0}{ssfletters}{'003}
\DeclareMathSymbol{\bsfXi}{0}{bsfletters}{'004}
\DeclareMathSymbol{\ssfXi}{0}{ssfletters}{'004}
\DeclareMathSymbol{\bsfPi}{0}{bsfletters}{'005}
\DeclareMathSymbol{\ssfPi}{0}{ssfletters}{'005}
\DeclareMathSymbol{\bsfSigma}{0}{bsfletters}{'006}
\DeclareMathSymbol{\ssfSigma}{0}{ssfletters}{'006}
\DeclareMathSymbol{\bsfUpsilon}{0}{bsfletters}{'007}
\DeclareMathSymbol{\ssfUpsilon}{0}{ssfletters}{'007}
\DeclareMathSymbol{\bsfPhi}{0}{bsfletters}{'010}
\DeclareMathSymbol{\ssfPhi}{0}{ssfletters}{'010}
\DeclareMathSymbol{\bsfPsi}{0}{bsfletters}{'011}
\DeclareMathSymbol{\ssfPsi}{0}{ssfletters}{'011}
\DeclareMathSymbol{\bsfOmega}{0}{bsfletters}{'012}
\DeclareMathSymbol{\ssfOmega}{0}{ssfletters}{'012}
\DeclareMathOperator*{\dist}{dist}
\numberwithin{theorem}{section}
\newcommand{\TheTitle}
{DCA based Algorithm with Extrapolation for Nonconvex Nonsmooth Optimization}
\newcommand{\ShortTitle}{%
DCA based Algorithm with Extrapolation} 
\newcommand{\TheAuthors}{D. N. Phan, H. A. Le Thi}
\headers{\ShortTitle}{\TheAuthors}
\title{{\TheTitle}}
\author{
   Duy Nhat Phan\thanks{Department of Mathematics and Informatics, HCMC University of Education, Vietnam (\email{nhatpd@hcmue.edu.vn}).}
  \and
  Hoai An Le Thi\thanks{Department Computer Science and Application, LGIPM, University of Lorraine, France
             (\email{hoai-an.le-thi@univ-lorraine.fr}).}
 }
\definecolor{brightpink}{rgb}{1.0, 0.0, 0.5}
\begin{document}
\maketitle
\begin{abstract}
In this paper, we focus on the problem of minimizing the sum of a nonconvex differentiable function and a DC (Difference of Convex functions) function, where the differentiable function is not restricted to the global Lipschitz gradient continuity assumption. This problem covers a broad range of applications in machine learning and statistics such as compressed sensing, signal recovery, sparse dictionary learning, and matrix factorization etc. We take inspiration from the Nesterov's acceleration technique and the DC algorithm to develop a novel algorithm for the considered problem. Analyzing the convergence, we study the subsequential convergence of our algorithm to a critical point. Furthermore, we justify the global convergence of the whole sequence generated by our algorithm to a critical point, and establish its convergence rate under the Kurdyka-\L{}ojasiewicz condition. Numerical experiments on the nonnegative matrix completion problem are performed to demonstrate the efficiency of our algorithm and its superiority over well-known methods.
\end{abstract}
\section{Introduction}

In this paper, we specifically focus on a class of following nonconvex nonsmooth problems
\begin{equation}\label{model}
\min_{x\in\mathbb E} F(x) := f(x) + g(x) - h(x),
\end{equation}
where $\mathbb E$ is a finite dimensional real linear space and the functions $f, g$, and $h$ satisfy the following mild assumption:
\begin{assumption}\label{assump1}
\begin{itemize}
    \item[(a)] $f:\mathbb E \to \mathbb{R}$ is a nonconvex continuously differentiable function and there exists a strongly convex continuously differentiable function $\phi$ such that $L\phi - f$ and $l\phi + f$ are convex for some $L>0$ and $l\geq 0$. 
    
    \item[(b)] $g:\mathbb E \to \mathbb{R}\cap \{+\infty\}$ is a proper and lower semicontinuous convex function.
    
    \item[(c)] $h:\mathbb E\to \mathbb{R}$ is a convex function.
    \item[(d)] $F$ is bounded from below.
\end{itemize}
\end{assumption}
Problem \eqref{model} covers a broad range of applications in machine
learning and statistics, e.g., compressed sensing \cite{doncom}, signal recovery \cite{becspa}, sparse dictionary learning \cite{ahaksv}, sparse matrix factorization \cite{bolpro}, quadratic inverse problem in phase retrieve \cite{bolfir}, and Poisson linear inverse problems \cite{berima}. In particular, many optimization problems in these applications can be formulated as a regularized problem of the form
\begin{equation}\label{regularizedp}
    \min_{x\in\mathbb E}\biggl\{ f(x) + r(x)\biggr\},
\end{equation}
where $f$ is a loss function and $r$ is a regularization term for promoting sparse solutions.  We notice that if $L$-smooth and $L$-smooth adaptable \cite{bolfir} loss functions $f$ satisfy Assumption \ref{assump1} (a). Hence, the problem \eqref{regularizedp} can be recast into Problem \eqref{model} for some well-known penalty functions $r$ which are DC functions (Difference of Convex functions) such as SCAD \cite{Fan-2001}, MCP \cite{Zhang-2010}, and Exponential penalty function \cite{Bradley1998}.
In this paper, we are interested in the nonnegative matrix completion problem corresponding to \eqref{model}.
Given a nonnegative sparse matrix $\mbfA\in\mathbb{R}^{m\times n}$, in order to obtain factors $\mbfU\in\mathbb{R}^{m\times t}_+$ and $\mbfV\in\mathbb{R}^{t\times n}_+$ such that $\mbfA \approx \mbfU\mbfV$, we need to solve the following nonconvex optimization problem
\begin{equation}\label{MF}
    \min_{\mbfU\in\mathbb{R}^{m\times t}_+,\mbfV\in\mathbb{R}^{t\times n}_+} \biggl\{\frac{1}{2}\|\mathcal P(\mbfA - \mbfU\mbfV)\|_F^2 + r(\mbfU,\mbfV)\biggr\},
\end{equation}
where $f(\mbfU,\mbfV): = \frac{1}{2}\|\mathcal P(\mbfA - \mbfU\mbfV)\|_F^2$ is the data-fitting term, $r$ is a regularization term. Here, $\mathcal P(\mbfZ)_{ij} = \mbfZ_{ij}$ if $\mbfA_{ij}$ is observed and $0$ otherwise. The matrix completion problem taking the form of \eqref{MF} is one of the most successful approaches in recommendation system, see, e.g. \cite{kormat}. The usage of matrix completion can also be found in a wide range of tasks from sensor networks \cite{Biswas2006}, social network analysis \cite{Kim2011} to image processing \cite{Lui2013}.

\subsection{Related works}
The considered problem \eqref{model} can be expressed as a DC program
\begin{equation}\label{DCp}
    \min_{x\in\mathbb E}\biggr\{F(x) : = G(x) - H(x)\biggl\},
\end{equation}
where $G(x) = L\phi(x) + g(x)$ and $H(x) = L\phi(x) - f(x) + h(x)$ are convex functions. Hence, the powerful classical DCA \cite{letthe,phacon} for handling \eqref{DCp} iteratively linearizes $H$ by computing $v^k\in \partial H(x^k)$ and computes the next iterate $x^{k+1}$ by solving the convex sub-problem
\begin{equation}\label{sub-DC}
    \min_{x\in\mathbb E}\biggr\{G(x) - \langle v^k, x \rangle\biggl\}.
\end{equation}
\cite{aniwel} proposed an inertial DCA (iDCA) by incorporating a weak inertial force of the heavy-ball method of \cite{sompol} into DCA. More precisely, iDCA replaces $v^k$ in the convex subproblem \eqref{sub-DC} by $v^k + \gamma(x^k - x^{k-1})$ with $\gamma \geq 0$. This leads to the following convex sub-problem
\begin{equation*}
    \min_{x\in\mathbb E}\biggr\{G(x) - \langle v^k + \gamma(x^k - x^{k-1}), x \rangle\biggl\}.
\end{equation*}
In order to study the convergence of iDCA, \cite{aniwel} assumed that the second DC component $H$ is strongly convex, but this assumption can be replaced by the strongly convexity of the first DC component $G$.
In \cite{Phan2018_IJCAI}, by taking inspiration from the Nesterov's acceleration technique \cite{Nesterov1983}, the authors introduced an accelerated DCA (ADCA) that constructs an extrapolation point $y^k$ of the current iterate $x^k$ and the previous one $x^{k-1}$
\begin{equation*}
y^k = x^k +  \frac{t_k-1}{t_{k+1}}\biggl(x^k-x^{k-1}\biggr),
\end{equation*}
where $t_{k+1} = (1+\sqrt{1+4t^2_{k}})/2$. ADCA then verifies the following condition
\begin{equation}\label{equ:cond}
    F(y^k)\leq \max_{t = [k- q]_+,...,k}F(x^t),
\end{equation}
where $[k- q]_+ = \max(0,k- q)$. If the condition \eqref{equ:cond} is satisfied, $y^k$ will be used instead of $x^k$ to linearize $H$ by computing $v^k\in H(y^k)$. 
Like DCA, ADCA computes $x^{k+1}$ by solving the subproblem  \eqref{sub-DC}. However, the computing of the objective function values $F(y^k)$ may be expensive in some cases or the condition \eqref{equ:cond} is less likely to be satisfied in constrained problems.

For a special case of Problem \eqref{model} in which $f$ admits a $L$-Lipschitz continuous gradient, we can choose the function $\phi(x) = \frac{1}{2}\|x\|^2$. Hence, the sub-problem \eqref{sub-DC} of DCA applied to this particular case becomes the proximal one
\begin{equation*}
    \min_{x\in\mathbb E}\biggr\{\frac{L}{2}\|x-x^k\|^2  + \langle \nabla f(x^k) - \xi^k, x \rangle + g(x)\biggl\},
\end{equation*}
where $\xi^k\in\partial h(x^k)$. This DCA scheme is noting else but the proximal DCA algorithm (pDCA) proposed in \cite{Gotoh-2017} for the particular case.  In \cite{wenapr}, the authors proposed a version of pDCA with extrapolation (pDCAe) for solving a special setting of \eqref{model} in which $f$ is convex differentiable with $L$-Lipschitz continuous gradient. In particular, pDCAe first computes the extrapolation point $y^k = x^k + \beta_k(x^k-x^{k-1})$, where $\beta_k\in [0,\beta]$ with $\beta\in[0,1)$, and then computes the next iterate $x^{k+1}$ by solving the proximal sub-problem
\begin{equation*}
    \min_{x\in\mathbb E}\biggr\{\frac{L}{2}\|x-y^k\|^2  + \langle \nabla f(y^k) - \xi^k, x \rangle + g(x)\biggl\}.
\end{equation*}
For this case, \cite{zhaenh,zhanon} additionally assumed that $h$ is the supremum of finitely many convex smooth functions and proposed several variants of pDCAe for solving the resulting problem. 

Besides to DCA based algorithmms, popular proximal gradient (PG) algorithms and their accelerated versions have developed for special cases of \eqref{model} see, e.g. \cite{kappro,attcon,ngucon,Li2015,Yao2017}. However, these methods also require the global Lipschitz gradient continuity of $f$. In fact, this requirement may not often be satisfied for many practical problems, such as the quadratic inverse problem in phase retrieve \cite{bolfir}, Poisson linear inverse problems \cite{berima}, and the matrix factorization \eqref{MF}, that arise in many real world
applications. Recently, in \cite{bolfir}, the authors proposed a Bregman PG (BPG) algorithm for minimizing the sum of two nonconvex functions $f$ and $r$. \cite{mukcon,mukbey} developed an extrapolated version of BPG for the same problem. Although they do not impose the global Lipschitz gradient continuity on $f$, they require the weakly convexity of $r$. In addition, the aforementioned PG algorithms
have to compute the proximal map of nonconvex functions $r$
which do not has closed form in many cases. Usually, this
computation can be very expensive or impossible.

\subsection{Contribution}
Motivated by the advantages of the success of Nesterov's acceleration technique for convex programming to which it was accelerated with an $\mathcal{O}(1/k^2)$ convergence rate \cite{Nesterov1983,Beck2009}, we thereby aim to investigate a novel algorithm with extrapolation for solving the DC program \eqref{model}, named DCAe, by incorporating the Nesterov-type acceleration into DCA. In particular, the DCAe algorithm linearizes the first part $L\phi - f$ of $H$ at the extrapolated point $y^k$ and the second one $h$ at the current iterate $x^k$. DCAe subsumes pDCAe \cite{wenapr} as a special version. In addition, it is important noting that our problem \eqref{model} is more general than the problems considered in \cite{Gotoh-2017,wenapr,zhaenh,zhanon}. Since we do not require the global Lipschitz gradient continuity or convexity of $f$, the analysis in \cite{wenapr,zhaenh,zhanon} is not applicable to Problem \eqref{model}. Theoretically, we show that every limit point of the generated by DCAe is a critical point. Importantly, we study the global convergence of the whole sequence generated by DCAe, and establish its convergence rate with the Kurdyka-\L{}ojasiewicz (KL) assumption. As application, we take the nonnegative matrix completion problem into consideration. We perform numerical experiments on different datasets to demonstrate the efficiency of the proposed algorithm.

\subsection{Organization of the paper}
In section \ref{pre}, we present some basic concepts in optimization and Kurdyka-\L{}ojasiewicz property. We introduce the novel DCAe algorithm and investigate its convergence properties in section in section \ref{mainalgo}. Numerical experiments are showed in section \ref{numerical}. Finally, in section \ref{conclu}, we provide conclusions of the study.



\section{Preliminaries}\label{pre}
Before introducing the proposed algorithm, let us first recall some basis notations that will be used in the sequel.

Given a lower semicontinuous function $J: \mathbb E \to (-\infty, +\infty]$, its domain is defined by
\begin{equation*}
    dom(J): = \{x\in\mathbb E: J(x)<  +\infty\}.
\end{equation*}
The modulus of strong convexity of $J$ on $\Omega$, denoted by $\rho(J,\Omega)$ or $\rho(J)$ if $\Omega = \mathbb E$, is given by
\begin{equation}
    \rho(J,\Omega) = \sup \{\rho \ge 0 : J - (\rho /2)\|\cdot\|^2 \text{~is convex on} ~\Omega\}.
\end{equation}
One says that $J$ is $\rho$ - \emph{strongly convex} on $\Omega$ if $\rho(J,\Omega) > 0$.

For a given $x^*\in dom(J)$, the Fr\'echet subdifferential \cite{Mor06} of a lower semicontinous function $J$ at $x^*$ is defined by
\begin{equation*}
\partial ^FJ(x^*) = \left\{u:\liminf_{x\to x^*}\frac{J(x) - J(x^*) - \langle u,x - x^*\rangle}{\|x - x^*\|}\geq 0 \right\}.
\end{equation*}
The limiting subdiferential \cite{Mor06} of $J$ at $x^*\in dom(J)$ is defined by
\begin{equation*}
\partial ^LJ(x^*) = \left\{u:\exists x^k \xrightarrow{J} x^*, u^k\in \partial ^FJ(x^k), u^k\to u \right\},
\end{equation*}
where the notation $x^k \xrightarrow{J} x^*$ means that $x^k \to x^*$ and $J(x^k) \to J(x^*)$. If $J$ is differentiable at $x^*$, then $\partial^FJ(x^*) = \{\nabla J(x^*)\}$. Moreover, if $J$ is continuously differentiable on a neighborhood of $x^*$, then $\partial^LJ(x^*) = \{\nabla J(x^*)\}$. We note that if $J = \phi + \psi$, where $\phi$ is lower semicontinuous, and $\psi$ is continuously differentiable on a neighborhood of $x^*$, then
\begin{equation*}
\partial ^LJ(x^*) = \partial ^L\phi(x^*) + \nabla \psi(x^*).
\end{equation*}

If $J$ is convex, then the Fr\'echet and the limiting subdifferential reduce to the subdifferential in the sense of convex analysis:
\[
\partial J (x^*) := \{y : J(x) \ge J(x^*) + \langle x-x^*,y\rangle, \forall x \in \mathbb E\}.
\]

A point $x^*$ is called a critical point of the problem \eqref{model} if and only if $[\nabla f(x^*) + \partial g(x^*) \cap \partial h(x^*) \neq \emptyset$. 

Given $\eta\in(0,\infty]$, we denote by $\mathcal{M}_\eta$ the class of continuous concave functions $\psi:[0,\eta) \rightarrow [0,\infty)$ verifying 
\begin{itemize}
\item[(i)] $\psi(0) = 0$ and $\psi$ is continuously differentiable on $(0,\eta)$,
\item[(ii)] $\psi'(t)>0$ for all $t\in(0,\eta)$.
\end{itemize}
A lower semicontinuous function $J$ satisfies the KL property \cite{Attouch2010} at $u^*\in dom(\ \partial^L J)$ if there exists $\eta >0$, a neighborhood $\mathcal{V}$ of $u^*$, and $\psi\in \mathcal{M}_\eta$ such that for all $u\in\mathcal{V}\cap\{u:J(u^*)<J(u)<J(u^*)+\eta\}$, one has
\begin{equation*}
\psi'(J(u) - J(u^*))\dist(0,\partial^LJ(u)) \geq 1.
\end{equation*}
We notice that the class of functions verifying the KL property is very ample, for example, semi-algebraic, real analytic, and log-exp functions see, e.g. \cite{attcon,Attouch2010,hedon}.

\section{Difference of Convex function Algorithm with Extrapolation}\label{mainalgo}
\subsection{Algorithm description}
Let us introduce a novel algorithm to solve the nonconvex nonsmooth optimization problem \eqref{model}. In order to accelerate DCA, we propose its version with extrapolation, named DCAe that, at each iteration, constructs an extrapolation point $y^k$ of the current iterate $x^k$ and the previous one $x^{k-1}$
\begin{equation*}
    y^k = x^k + \beta_k(x^k - x^{k-1}),
\end{equation*}
where $\beta_k$ is an extrapolation parameter satisfying
\begin{equation*}
 (L+l)D_\phi(x^k, y^k) \leq  \delta L D_\phi(x^{k-1}, x^k),
\end{equation*}
with $\delta\in (0,1)$ and $D_\phi(x,y):= \phi(x) - [\phi(y) + \langle\nabla \phi(y), x - y\rangle]$.
The DCAe employs both $y^k$ and $x^k$ in order to linearize $H$. In particular, the first part $L\phi - f$ of $H$ is linearized at $y^k$ by computing $\nabla L\phi(y^k) - \nabla f(y^k)$ while the second one $h$ is linearized at $x^k$ by computing $\xi^k\in\partial h(x^k)$. The next iterate $x^{k+1}$ is then computed by solving the following convex sub-problem
\begin{equation*}
    \min_{x\in\mathbb E}\biggr\{G(x) - \biggl\langle \nabla L\phi(y^k) - \nabla f(y^k) + \xi^k, x \biggr\rangle\biggl\}.
\end{equation*}
The DCAe algorithm is summarized in Algorithm \ref{DCAe}.
\begin{algorithm}[H]
\caption{DCAe for solving \eqref{model}}
\begin{algorithmic}\label{DCAe}
   \STATE {\bfseries Initialization:} Choose $x^0 = x^{-1}$, $\delta\in (0,1)$, $L>0$, $l\geq 0$, and $k\leftarrow 0$. 
   \REPEAT 
   \STATE 1. Compute $y^k = x^k + \beta_k(x^k - x^{k-1})$, where $\beta_k$ satisfies
\begin{equation}\label{equ:beta}
 (L+l)D_\phi(x^k, y^k) \leq  \delta L D_\phi(x^{k-1}, x^k)
\end{equation}
    \STATE 2. Compute $x^{k+1}$ by solving the convex sub-problem
    \begin{equation}
    \min_{x\in\mathbb E}\biggr\{G(x) - \biggl\langle L\phi(y^k) - \nabla f(y^k) + \xi^k, x \biggr\rangle\biggl\},
\end{equation}
where $\xi^k\in\partial h(x^k)$.
   \STATE 3. $k\leftarrow k+ 1$.
    \UNTIL{Stopping criterion.}
\end{algorithmic}
\end{algorithm}
Now, let us provide some special choices of the extrapolation parameter. We first consider the special case in which $f$ admits the $L$-Lipschitz continuous gradient. Hence, we can choose the function $\phi(x) = \frac{1}{2}\|x\|^2$. Condition \eqref{equ:beta} becomes
\begin{equation*}
    (L+l)\beta_k^2\|x^k-x^{k-1}\|^2 \leq \delta L\|x^k-x^{k-1}\|^2.
\end{equation*}
Therefore, we choose $\beta_k$ such that $\beta_k \leq \sqrt{\frac{\delta L}{L+l}}$.
Moreover, if $f$ is convex, we can take $l=0$ and $\beta_k\leq \sqrt{\delta}$. 
In general case, \cite[Lemma 4.1]{mukcon} showed that there always exists $\gamma_k>0$ such that the condition \eqref{equ:beta} is satisfies for all $\beta_k\in[0,\gamma_k]$. Therefore, $\beta_k$ can be determined by a line search as follows. At each iteration, we initialize $\beta_k = \frac{\mu_k-1}{\mu_k}$, and while the inequality \eqref{equ:beta} does not hold, we decrease $\beta_k$ by $\beta_k = \beta_k\eta$ with $\eta \in(0,1)$, where $\mu_k = \frac{1}{2}(1+\sqrt{1+4\mu_{k-1}^2})$ and $\mu_0=1$.

\subsection{Convergence Analysis}
\subsubsection{Sub-sequential convergence}
In this subsection, we study the convergence of the DCAe. The first result presents the sub-sequential convergence of the sequence $\{x^k\}$ generated by DCAe. 
\begin{theorem}\label{theo1}
Let $\{x^k\}$ be the sequence generated by DCAe. Suppose Assumption \ref{assump1} is satisfied.  The following statements are hold.

\begin{itemize}
    \item[a)] For $k=0,1,...$, we have
    \begin{equation}\label{propertya}
    F(x^{k+1}) \leq F(x^k) +\delta L D_\phi(x^{k-1},x^k) \ - L D_\phi(x^k,x^{k+1}).
\end{equation}
    \item[b)] $\sum\limits_{k=0}^{+\infty}\|x^k - x^{k-1}\|^2 < +\infty$, and thus $\lim\limits_{k\to +\infty}\|x^k - x^{k-1}\| = 0$.
    \item[c)] If $x^*$  is a limit point of $\{x^k\}$, then $x^*$ is a critical point of \eqref{model}.
\end{itemize}
\end{theorem}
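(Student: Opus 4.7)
The plan is to prove the three parts in order, with (a) as the workhorse descent inequality, (b) a telescoping consequence, and (c) a standard limit-point argument.

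For part (a), I would combine four ingredients. The convexity of $L\phi - f$ (Assumption \ref{assump1}(a)) applied at $y^k$ versus $x^{k+1}$ gives $f(x^{k+1}) \leq f(y^k) + \langle \nabla f(y^k), x^{k+1} - y^k\rangle + L D_\phi(x^{k+1}, y^k)$, and the convexity of $l\phi + f$ applied at $y^k$ versus $x^k$ gives $f(y^k) + \langle \nabla f(y^k), x^k - y^k\rangle \leq f(x^k) + l D_\phi(x^k, y^k)$; chaining them via $\pm \langle \nabla f(y^k), x^k - y^k\rangle$ bounds $f(x^{k+1}) - f(x^k)$ by $l D_\phi(x^k, y^k) + L D_\phi(x^{k+1}, y^k)$ plus a linear term in $x^{k+1} - x^k$. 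First-order optimality for the convex subproblem supplies $\eta^{k+1} := L\nabla\phi(y^k) - L\nabla\phi(x^{k+1}) - \nabla f(y^k) + \xi^k \in \partial g(x^{k+1})$; combining the subgradient inequality $g(x^k) \geq g(x^{k+1}) + \langle \eta^{k+1}, x^k - x^{k+1}\rangle$ with the three-point Bregman identity
\begin{equation*}
\langle \nabla\phi(y^k) - \nabla\phi(x^{k+1}), x^{k+1} - x^k\rangle = D_\phi(x^k, y^k) - D_\phi(x^k, x^{k+1}) - D_\phi(x^{k+1}, y^k)
\end{equation*}
yields a bound on $g(x^{k+1}) - g(x^k)$ whose Bregman part is $L D_\phi(x^k, y^k) - L D_\phi(x^k, x^{k+1}) - L D_\phi(x^{k+1}, y^k)$. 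The convexity of $h$ with $\xi^k \in \partial h(x^k)$ gives $h(x^k) - h(x^{k+1}) \leq -\langle \xi^k, x^{k+1} - x^k\rangle$. Summing the three estimates, the linear terms in $\nabla f(y^k)$ and $\xi^k$ cancel, as do the $\pm L D_\phi(x^{k+1}, y^k)$ contributions, leaving $F(x^{k+1}) - F(x^k) \leq (L+l) D_\phi(x^k, y^k) - L D_\phi(x^k, x^{k+1})$. Invoking \eqref{equ:beta} on the first term delivers (a).

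For part (b), I would telescope (a) from $k = 0$ to $K$ using the initialization $x^{-1} = x^0$, so that $D_\phi(x^{-1}, x^0) = 0$. After a single index shift, the right-hand side collapses to $-L(1-\delta) \sum_{k=0}^{K-1} D_\phi(x^k, x^{k+1}) - L D_\phi(x^K, x^{K+1})$, and combined with Assumption \ref{assump1}(d) this yields $L(1-\delta) \sum_{k=0}^{\infty} D_\phi(x^k, x^{k+1}) \leq F(x^0) - \inf F < \infty$. Strong convexity of $\phi$, say with modulus $\sigma > 0$, implies $D_\phi(u, v) \geq \tfrac{\sigma}{2}\|u - v\|^2$, so $\sum_{k=0}^{\infty} \|x^k - x^{k-1}\|^2 < \infty$ and in particular $\|x^k - x^{k-1}\| \to 0$.

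For part (c), let $x^{k_j} \to x^*$. Using \eqref{equ:beta} together with $D_\phi(x^{k-1}, x^k) \to 0$ from (b), strong convexity of $\phi$ forces $\|y^k - x^k\| \to 0$, hence $y^{k_j - 1} \to x^*$ (together with $x^{k_j - 1} \to x^*$, which again follows from (b)). Since $h$ is finite-valued convex, $\partial h$ is locally bounded, so $\{\xi^{k_j - 1}\}$ is bounded and, along a further subsequence, $\xi^{k_j - 1} \to \xi^* \in \partial h(x^*)$ by closedness of the graph of $\partial h$. The optimality inclusion at iteration $k_j - 1$, namely $L\nabla\phi(y^{k_j - 1}) - L\nabla\phi(x^{k_j}) - \nabla f(y^{k_j - 1}) + \xi^{k_j - 1} \in \partial g(x^{k_j})$, has a left-hand side tending to $-\nabla f(x^*) + \xi^*$ by continuity of $\nabla\phi$ and $\nabla f$. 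Passing to the limit inside the subgradient inequalities for $\partial g$, while using lower semicontinuity of $g$ to bound $\liminf_j g(x^{k_j}) \geq g(x^*)$, gives $-\nabla f(x^*) + \xi^* \in \partial g(x^*)$; equivalently $0 \in \nabla f(x^*) + \partial g(x^*) - \partial h(x^*)$, so $x^*$ is a critical point of \eqref{model}.

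The most delicate step is the limit passage for $\partial g$ in (c), because $g$ is only lsc convex rather than continuous: one must lean on lsc of $g$ to control $\liminf_j g(x^{k_j})$ when taking limits of the subgradient inequalities. Everything else is careful bookkeeping, with the three-point Bregman identity carrying the algebra in (a) and the extrapolation condition \eqref{equ:beta} being inserted verbatim to absorb the $(L+l) D_\phi(x^k, y^k)$ term in both (a) and the setup of $\|y^k - x^k\| \to 0$ used in (c).
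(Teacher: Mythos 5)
Your proposal is correct and follows essentially the same route as the paper: part (a) combines the same four ingredients (convexity of $L\phi-f$ at $y^k$, convexity of $l\phi+f$, the optimality inclusion for the subproblem with the subgradient inequality for $g$, and convexity of $h$) to reach $F(x^{k+1})\le F(x^k)+(L+l)D_\phi(x^k,y^k)-LD_\phi(x^k,x^{k+1})$ before invoking \eqref{equ:beta}, and parts (b) and (c) are the same telescoping and closed-graph limit arguments. The only cosmetic differences are that you make the three-point Bregman identity explicit where the paper carries out the equivalent algebra directly, and you derive $\|y^k-x^k\|\to 0$ from \eqref{equ:beta} and strong convexity rather than from the bound on $\beta_k$.
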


\begin{proof}
a) By the definition of the gradient $\xi^k\in\partial h(x^k)$, we have
\begin{equation*}
    h(x^{k+1}) \geq h(x^k) + \langle \xi^k, x^{k+1}- x^k\rangle ,
\end{equation*}
which is equivalent to
\begin{equation}
    \begin{split}
        -h(x^{k+1}) \leq -h(x^k) - \langle \xi^k, x^{k+1}- x^k\rangle 
    \end{split}
\end{equation}
Similarly, we have
\begin{equation*}
\begin{split}
    L\phi(x^{k+1}) - f(x^{k+1})\geq L\phi(y^k) - f(y^k)
    + \langle L\nabla\phi(y^k) - \nabla f(y^k), x^{k+1} - y^k\rangle,
\end{split}
\end{equation*}
which is equivalent to 
\begin{equation}
    \begin{split}
        f(x^{k+1}) \leq f(y^k) + L(\phi(x^{k+1}) - \phi(y^k))
          - \langle L\nabla\phi(y^k) - \nabla f(y^k), x^{k+1} - y^k\rangle.
    \end{split}
\end{equation}
Since $x^{k+1}$ is a solution to the convex sub-problem \eqref{sub-DC}, we have
\begin{equation*}
    L\nabla\phi(y^k) - \nabla f(y^k) + \xi^k - L\nabla\phi(x^{k+1}) \in\partial g(x^{k+1}).
\end{equation*}
This implies that
\begin{equation}
    \begin{split}
         g(x^{k+1}) \leq  g(x^k) - \langle L\nabla\phi(y^k) + \xi^k - \nabla f(y^k)
        - L\nabla\phi(x^{k+1}), x^k - x^{k+1}\rangle
    \end{split}
\end{equation}
Summing the three inequalities above, we get
\begin{equation}\label{eq:10111}
    \begin{split}
        F(x^{k+1}) \leq g(x^k) - h(x^k) + f(y^k) + L\phi(x^{k+1}) 
         - L\phi(y^k)\\
         - \biggl\langle L\nabla\phi(y^k) -  \nabla f(y^k), x^k - y^k\biggr\rangle 
        - \biggl\langle L\nabla\phi(x^{k+1}), x^{k+1} - x^k\biggr\rangle\\
        = g(x^k) - h(x^k) + f(y^k) + \biggl\langle \nabla f(y^k), x^k - y^k\biggr\rangle 
         + L D_\phi(x^k,y^k) - L D\phi(x^k,x^{k+1}).
    \end{split}
\end{equation}
On the other hand, by the convexity of $l\phi + f$, we have
\begin{equation}
\begin{split}
    l\phi(x^k) + f(x^k) \geq l\phi(y^k) + f(y^k)  
     +  \biggl\langle l\nabla \phi(y^k) + \nabla f(y^k), x^k - y^k\biggr\rangle
\end{split}
\end{equation}
which is equivalent to
\begin{equation}\label{eq:10112}
    \begin{split}
       f(y^k) +  \biggl\langle \nabla f(y^k), x^k - y^k\biggr\rangle  \leq f(x^k) + lD_\phi(x^k,y^k)
    \end{split}
\end{equation}
Summing the two inequalities \eqref{eq:10111} and \eqref{eq:10112} above gives us that
\begin{equation}
\begin{split}
    F(x^{k+1}) \leq F(x^k) + (L+ l)D_\phi(x^k,y^k) 
     - L D\phi(x^k,x^{k+1})\\        
          \leq F(x^k) + \delta L D_\phi(x^{k-1},x^k) \ - L D\phi(x^k,x^{k+1}),
\end{split}
\end{equation}
where the second inequality follows from the definition of the parameter $\beta_k$. 

b) 
Summing the inequality \eqref{propertya} over $k=0,...,N$ gives that
\begin{equation*}
    \begin{split}
        \sum_{k=1}^N(1-\delta)LD_\phi(x^{k-1},x^k) \leq  F(x^0) + LD_\phi(x^{-1},x^0) - F(x^{N+1})
        - LD_\phi(x^N,x^{N+1}) \leq F(x^0) - \alpha,
    \end{split}
\end{equation*}
where $\alpha = \inf_{x\in \mathbb E}F(x)$. This inequality and the $\rho$-strongly convexity of $\phi$ imply that
\begin{equation}
    \sum_{k=0}^N(1-\delta)L\rho\|x^k - x^{k-1}\|^2\leq F(x^0) - \alpha
\end{equation}
Therefore, we have
\begin{equation}
    \sum_{k=0}^N\|x^k - x^{k-1}\|^2\leq \frac{ F(x^0) - \alpha}{(1-\delta)L\rho}.
\end{equation}
By taking to the limit, we have
\begin{equation}
    \sum_{k=0}^{+\infty}\|x^k - x^{k-1}\|^2 < +\infty,
\end{equation}
and $\lim_{k\to+\infty}\|x^k - x^{k-1}\| = 0$.

c) Let $\{x^{k_j}\}$ be a subsequence of $\{x^k\}$ that converges to $x^*$. It follows from (b) and $0\leq \beta_k<1$ that $\lim x^{k_j+1} = \lim x^{k_j-1} = \lim y^{k_j} = x^*$. Without loss generality, we can assume that $\xi^{k_j}$ converges to $\xi^*$. By the closedness of $\partial h(x^{k_j})$, we get that $\xi^*\in\partial h(x^*)$. From the definition of $x^{k_j+1}$, we have
\begin{equation}
    \begin{split}
        L\nabla\phi(y^{k_j}) - \nabla f(y^{k_j}) + \xi^{k_j} 
        - L\nabla\phi(x^{k_j+1}) \in\partial g(x^{k_j+1}).
    \end{split}
\end{equation}
Thanks to the continuity of $\nabla\phi$ and $\nabla f$, and the closedness of $\partial g$, passing to the limit gives us that
\begin{equation}
    - \nabla f(x^*) + \xi^* \in\partial g(x^*).
\end{equation}
Therefore, we obtain that
\begin{equation}
    \xi^* \in [\nabla f(x^*) + \partial g(x^*)]\cap \partial h(x^*).
\end{equation}
This completes the proof.
\end{proof}

\subsubsection{Global convergence}

Denote $\Phi$ be an auxiliary function by
\begin{equation}
    \Phi(x,y) = F(x) + \frac{(1+\delta)L}{2} D_\phi(y,x),
\end{equation}
The following theorem shows the global convergence of the whole bounded sequence generated by DCAe under an additional assumption below.
\begin{assumption}\label{assump2}
\begin{itemize}
    \item[(a)] $\Phi$ satisfies the KL property at any point $x \in dom\partial^L\Phi$.
    \item[(b)] $\nabla f$ and $\nabla \phi$ are Lipschitz continuous on any bounded subset of $\mathbb E$.
    \item[(c)] $h$ is differentiable with $\nabla h$ being Lipschitz continuous on any bounded subset of $\mathbb E$.
\end{itemize}
\end{assumption}

\begin{theorem}\label{theo2}
Let $\{x^k\}$ be a sequence generated by DCAe. Suppose that Assumptions \ref{assump1} and \ref{assump2}  are satisfied. If $\{x^k\}$ is bounded, it converges to a critical point of \eqref{model}. 
\end{theorem}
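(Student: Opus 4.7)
The proof will follow the Kurdyka-\L{}ojasiewicz global convergence framework of Attouch-Bolte-Svaiter, applied to the auxiliary function $\Phi$ supplied in the statement. Introduce the shifted iterates $z^k := (x^k, x^{k-1}) \in \mathbb E \times \mathbb E$ and verify three standard ingredients along $\{z^k\}$: (i) sufficient decrease of $\Phi$; (ii) a relative-error bound relating $\dist(0, \partial^L \Phi(z^{k+1}))$ to $\|x^{k+1} - x^k\| + \|x^k - x^{k-1}\|$; (iii) continuity of $\Phi$ along convergent subsequences. Combined with the KL property from Assumption~\ref{assump2}(a), these force $\sum_k \|x^{k+1} - x^k\| < \infty$, so $\{x^k\}$ is Cauchy; Theorem~\ref{theo1}(c) then identifies its limit as a critical point of \eqref{model}.

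\textbf{Step 1 (descent for $\Phi$).} Writing out $\Phi(z^{k+1}) - \Phi(z^k)$ and substituting the descent inequality \eqref{propertya} from Theorem~\ref{theo1}(a), the coefficients of $D_\phi(x^{k-1}, x^k)$ and $D_\phi(x^k, x^{k+1})$ both simplify to $-(1-\delta)L/2$ after an elementary cancellation. Using the $\rho$-strong convexity of $\phi$, this yields
\begin{equation*}
\Phi(z^{k+1}) \le \Phi(z^k) - \frac{(1-\delta)L\rho}{4}\bigl(\|x^{k+1} - x^k\|^2 + \|x^k - x^{k-1}\|^2\bigr),
\end{equation*}
so $\{\Phi(z^k)\}$ is monotone non-increasing and convergent (it is bounded below by Assumption~\ref{assump1}(d)).

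\textbf{Step 2 (subgradient bound).} The $y$-block derivative at $z^{k+1}$ is $\frac{(1+\delta)L}{2}(\nabla\phi(x^k) - \nabla\phi(x^{k+1}))$, controlled by $\|x^{k+1} - x^k\|$ times the local Lipschitz constant of $\nabla\phi$ on the bounded trajectory (Assumption~\ref{assump2}(b)). For the $x$-block, I use Assumption~\ref{assump2}(c) to rewrite the subproblem optimality as
\begin{equation*}
L\nabla\phi(y^k) - \nabla f(y^k) + \nabla h(x^k) - L\nabla\phi(x^{k+1}) \in \partial g(x^{k+1}).
\end{equation*}
Adding $\nabla f(x^{k+1}) - \nabla h(x^{k+1}) + \tfrac{(1+\delta)L}{2}\nabla_x D_\phi(x^k, x^{k+1})$ to the left-hand side produces an element of $\partial^L_x \Phi(x^{k+1}, x^k)$ whose norm, by local Lipschitzness of $\nabla f$, $\nabla h$, $\nabla\phi$ together with $y^k - x^{k+1} = (x^k - x^{k+1}) + \beta_k(x^k - x^{k-1})$ and $\beta_k \in [0,1)$, is bounded by a constant multiple of $\|x^{k+1} - x^k\| + \|x^k - x^{k-1}\|$.

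\textbf{Step 3 and main obstacle.} By Theorem~\ref{theo1}(b), $\|x^k - x^{k-1}\| \to 0$, so every cluster point of $\{z^k\}$ is diagonal of the form $(x^*, x^*)$ with $\Phi(x^*, x^*) = F(x^*)$. Along a subsequence $x^{k_j} \to x^*$, continuity of $f, h, \phi, \nabla\phi$ plus monotone convergence of $\{\Phi(z^k)\}$ together with lower semicontinuity of $g$ force $g(x^{k_j}) \to g(x^*)$, giving the continuity condition $\Phi(z^{k_j}) \to \Phi(x^*, x^*)$. Plugging (i)-(iii) into the concave-desingularizer telescoping argument with the KL inequality for $\Phi$ at $(x^*, x^*)$ produces the summability of $\{\|x^{k+1} - x^k\|\}$, and the theorem follows. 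The hardest step will be (ii): because Assumption~\ref{assump2}(b) only supplies local Lipschitzness of $\nabla\phi$ rather than $C^2$ regularity, the term $\nabla_x D_\phi(y, x)$ is not a priori a classical gradient. I will handle this via a mean-value argument for $D_\phi(y, \cdot)$: since $\nabla_x D_\phi(y, x)|_{x=y} = 0$ and $\nabla\phi$ is locally Lipschitz with constant $L_\phi$, every element of the limiting subdifferential of $D_\phi(y, \cdot)$ at $x$ has norm at most $L_\phi\|x - y\|$, which is exactly what closes the estimate without any $C^2$ assumption on $\phi$.
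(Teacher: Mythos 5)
Your proposal follows the same route as the paper: the Attouch--Bolte--Svaiter framework applied to $\Phi$ along $z^k=(x^k,x^{k-1})$, verifying sufficient decrease, a relative-error bound, and a continuity condition, then invoking the KL inequality to get summability of $\{\|x^{k+1}-x^k\|\}$. Your Step 1 is exactly the paper's computation (both coefficients do collapse to $-(1-\delta)L/2$, giving the constant $(1-\delta)L\rho/4$), and your Step 2 reproduces the paper's subgradient estimate; in fact your mean-value treatment of $\partial^L_x D_\phi(y,\cdot)$ is more careful than the paper, which silently uses $\nabla^2\phi$ and a bound $\|\nabla^2\phi(x^{k+1})\|\le M$ that Assumption \ref{assump2}(b) does not literally provide.

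There is, however, a genuine gap in Step 3. You claim that continuity of $f,h,\phi,\nabla\phi$, monotone convergence of $\{\Phi(z^k)\}$, and lower semicontinuity of $g$ ``force $g(x^{k_j})\to g(x^*)$.'' They do not: these ingredients give $F(x^{k_j})\to\Phi^*$ for the monotone limit $\Phi^*$ and, by lower semicontinuity, only the one-sided inequality $F(x^*)\le\Phi^*$. Nothing you have invoked rules out $g$ (hence $F$) dropping strictly in the limit, in which case $\Phi(z^{k_j})\not\to\Phi(z^*)$ and the KL telescoping argument cannot be started at the cluster point. The missing ingredient --- which is precisely what the paper uses --- is the minimizing property of $x^{k_j}$ in the strongly convex subproblem: since $x^{k_j}$ minimizes $x\mapsto G(x)-\langle L\nabla\phi(y^{k_j-1})-\nabla f(y^{k_j-1})+\xi^{k_j-1},x\rangle$, comparing its objective value with that at $x^*$ and letting $j\to\infty$ (using $y^{k_j-1}\to x^*$, continuity of $\nabla\phi,\nabla f$, and boundedness of $\xi^{k_j-1}$) yields $\limsup_j G(x^{k_j})\le G(x^*)$, hence $\limsup_j F(x^{k_j})\le F(x^*)$, which combined with lower semicontinuity gives the required two-sided convergence $\Phi(z^{k_j})\to\Phi(z^*)$. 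With that repair your argument closes and coincides with the paper's proof.
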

\begin{proof}
Denote $z^k = (x^k,x^{k-1})$. We first prove that the sequence $\{z^k\}$ satisfy three conditions $\mathbf H1$, $\mathbf H2$, and $\mathbf H3$ in \cite{attcon}.

($\mathbf H1$) \emph{Sufficient decrease condition}. From the inequality \eqref{propertya} and the $\rho$-strongly convexity of $\phi$, we derive that for all $k$
    \begin{equation*}
          \Phi(z^k) - \Phi(z^{k+1}) \geq \frac{(1-\delta)L\rho}{4}\|z^k - z^{k+1}\|^2.
    \end{equation*}
    
($\mathbf H2$) \emph{Relative error condition}. We need to justify that there exists $\kappa$ such that for all $k$
    \begin{equation*}
        \dist\biggl(0,\partial^L\Phi(z^{k+1})\biggr) \leq \kappa \|z^k - z^{k+1}\|.
    \end{equation*}
By the boundedness of $\{x^k\}$ and the locally Lipschitz continuity of $\nabla h, \nabla \phi, \nabla f$, there exists $L_h,L_\phi, L_f>0$ such that for all $k$
\begin{equation}\label{equ8141}
    \begin{split}
        \|\nabla h(x^k) - \nabla h(x^{k+1})\| &\leq L_h\|x^{k+1}-x^k\|\\
        \|\nabla \phi(y^k) - \nabla \phi(x^{k+1})\| &\leq L_\phi\|y^k-x^{k+1}\|\\
        \|\nabla f(y^k) - \nabla f(x^{k+1})\| &\leq L_\phi\|y^k-x^{k+1}\|.
    \end{split}
\end{equation}
It follows from the definition of $x^{k+1}$ that
\begin{equation}
    \begin{split}
        L\phi(y^k) - \nabla f(y^k) + \nabla h(x^k) - L\phi(x^{k+1}) \in \partial g(x^{k+1}).
    \end{split}
\end{equation}
Therefore, we have
\begin{equation}\label{eq:10113}
    \begin{split}
        L\biggl(\phi(y^k) - \phi(x^{k+1})\biggr) + \nabla f(x^{k+1}) - \nabla f(y^k) 
         + \nabla h(x^k) - \nabla h(x^{k+1})
         \in \partial^L F(x^{k+1})
    \end{split}
\end{equation}
On the other hand, we have
\begin{equation}\label{eq:10114}
    \partial^L\Phi(z^{k+1}) = \begin{pmatrix}
         \partial^L F(x^{k+1}) + (1+\delta)L/2\langle \nabla^2 \phi(x^{k+1}), x^{k+1} - x^k\rangle \\
           (1+\delta)L/2\biggl(\phi(x^k) - \phi(x^{k+1})\biggr)
         \end{pmatrix}.
\end{equation}
From this, \eqref{eq:10113}, and \eqref{equ8141}, we have
\begin{equation*}
    \begin{split}
        \dist\biggl(0,\partial^L\Phi(z^{k+1})\biggr) \leq \|L(\phi(y^k) - \phi(x^{k+1})) + \nabla f(x^{k+1})
        - \nabla f(y^k) + \nabla h(x^k) - \nabla h(x^{k+1}) \\ + (1+\delta)L/2\langle \nabla^2 \phi(x^{k+1}), x^{k+1} - x^k\rangle \|
        + \|(1+\delta)L/2(\phi(x^k) - \phi(x^{k+1}))\|\\
         \leq (L L_\phi + L_f)\|x^{k+1} - y^k\|
        + (L_h+(1+\delta)L/2(M + L_\phi))\|x^{k+1}-x^k\|\\
        \leq (L L_\phi + L_f)\|x^k - x^{k-1}\| + (LL_\phi + L_f+ L_h+(1+\delta)L/2(M + L_\phi))
        \|x^{k+1}-x^k\|,
    \end{split}
\end{equation*}
where we have used the facts that $\|\nabla^2 \phi(x^{k+1})\|\leq M$ and $\beta_k\leq 1$. Therefore, we achieve 
\begin{equation*}
    \dist\biggl(0,\partial^L\Phi(z^{k+1})\biggr) \leq \kappa\|z^k - z^{k+1}\|,
\end{equation*}
where we have used the Cauchy-Schwarz inequality and $\kappa = (LL_\phi + L_f+ L_h+(1+\delta)L/2(M + L_\phi))\sqrt{2}$.

($\mathbf H3$) \emph{Continuity condition}. By the boundedness of $\{x^k\}$ and the part (b) of Theorem \ref{theo1}, there exists a sub-sequence $\{z^{k_j}\} = \{(x^{k_j},x^{k_j-1})\}$ of $\{z^k\}$ that converges to a point $z^* = (x^*,x^*)$. We need to prove that $\Phi(z^{k_j})\to \Phi(z^*)$ as $j\to+\infty$. It follows from (b) of Theorem \ref{theo1} and $0\leq \beta_k<1$ that $\lim x^{k_j-1} = \lim x^{k_j-2} = \lim y^{k_j-1} = x^*$. It follows from the definition of $x^{k_j}$ that 
\begin{equation}\label{equ8121}
    \begin{split}
        G(x^{k_j}) \leq G(x^*) - \langle L\phi(y^{k_j-1}) - \nabla f(y^{k_j-1})
      + \xi^{k_j-1} , x^* - x^{k_j}\rangle
    \end{split}
\end{equation}
Thanks to the fact that $\lim x^{k_j-1} = \lim x^{k_j-2} = \lim y^{k_j-1} = x^*$, the continuity of $\nabla f, \nabla \phi$, and the boundedness of $\{\xi^{k_j-1}\}$, taking to the limit in \eqref{equ8121} gives us that
\begin{equation}
    \limsup_{j\to+\infty}G(x^{k_j}) \leq G(x^*).
\end{equation}
Therefore, we have
\begin{equation}
    \begin{split}
        \limsup_{j\to+\infty}F(x^{k_j}) &=\limsup\limits_{j\rightarrow\infty}[G(x^{k_j}) - H(x^{k_j})]\\
& \leq \limsup\limits_{j\to+\infty}G(x^{k_j}) - \liminf\limits_{j\to+\infty}H(x^{k_j})\\
& \leq G(x^*) - \liminf\limits_{j\to+\infty}H(x^{k_j})\\
& \leq G(x^*) - H(x^*) = F(x^*).
    \end{split}
\end{equation}
On the other hand, from the lower semicontinuity of $F$, we obtain 
\begin{equation}
    \lim\inf_{j\to+\infty}F(x^{k_j}) \geq F(x^*).
\end{equation}
Hence, by the uniqueness of limit and the definition of $\Phi$, we have $\lim\limits_{j\to+\infty}\Phi(z^{k_j}) = \lim\limits_{j\to+\infty}F(x^{k_j}) = F(x^*) = \Phi(z^*)$.
The result now follows from the same arguments of the proof for \cite[Theorem 2.9]{attcon}.

\end{proof}

Moreover, if the function $\psi$ appearing in the KL inequality takes the form $\psi(s) = cs^{1-\theta}$ with $\theta\in [0,1)$ and $c>0$, we can derive the convergence rates for the both sequences $\{x^k\}$ and $\{F(x^k)\}$.
\begin{theorem}
Let $\{x^k\}$ be a sequence generated by DCAe. Suppose that the assumptions \ref{assump1} and \ref{assump2}  are satisfied. If $\{x^k\}$ is bounded and the function $\psi$ appearing in the KL inequality takes the form $\psi(s) = cs^{1-\theta}$ with $\theta\in [0,1)$ and $c>0$, the following statements hold. 
\begin{itemize}
    \item[a)] If $\theta = 0$, the sequences $\{x^k\}$ and $\{F(x^k)\}$ converge in a finite number of steps to $x^*$ and $F^*$, respectively.

\item[b)] If $\theta \in (0,1/2]$, the sequences $\{x^k\}$ and $\{F(x^k)\}$ converge linearly to $x^*$ and $F^*$, respectively.

\item[c)] If $\theta \in (1/2,1)$, there exist positive constants $\delta_1$, $\delta_2$, and $N$ such that
$\|x^k - x^*\| \leq \delta_1 k^{-\frac{1-\theta}{2\theta-1}}$  and $F(x^k) - F^* \leq \delta_2 k^{-\frac{1}{2\theta-1}}$ 
for all $k \geq N$.
\end{itemize}
\end{theorem}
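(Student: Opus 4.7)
My plan is to exploit three ingredients already established in the proof of Theorem~\ref{theo2}: the sufficient decrease $\Phi(z^k) - \Phi(z^{k+1}) \geq C_0\,\|z^{k+1}-z^k\|^2$, the relative-error estimate $\dist(0,\partial^L\Phi(z^{k+1})) \leq \kappa\,\|z^{k+1}-z^k\|$, and the convergence $z^k \to z^* = (x^*,x^*)$ with $\Phi(z^k) \to F^*$. Writing $r_k := \Phi(z^k) - F^* \geq 0$ (nonincreasing) and $d_k := \|z^k - z^{k-1}\|$, the first step is to plug the relative-error bound into the KL inequality at $z^k$ with $\psi(s) = c s^{1-\theta}$ to obtain the pointwise estimate
\begin{equation}\label{equ:klpoint}
r_k^{\theta} \leq c(1-\theta)\kappa\, d_k
\end{equation}
for all $k$ large enough, i.e., once $z^k$ lies in the KL neighborhood of $z^*$.

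For case (a), $\theta = 0$, \eqref{equ:klpoint} degenerates to $1 \leq c\kappa\, d_k$ whenever $r_k > 0$. Since $d_k \to 0$ by Theorem~\ref{theo1}(b), this forces $r_k = 0$ for all $k$ beyond some $N$, and sufficient decrease then yields $d_{k+1} = 0$, hence $x^{k+1} = x^k$ and $F(x^k) = F^*$. Both sequences therefore terminate in finitely many steps.

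For cases (b) and (c), I would carry out the standard KL bootstrapping argument (cf.\ the reasoning in \cite{attcon}). Combining concavity of $\psi$ with \eqref{equ:klpoint} and sufficient decrease yields $d_{k+1}^2 \leq M_1\, d_k\,[\psi(r_k) - \psi(r_{k+1})]$; the inequality $2\sqrt{ab} \leq a+b$ and telescoping then produce the tail bound $\sum_{j \geq k} d_j \leq 2 d_k + M_1\,\psi(r_k)$, so that $\|x^k - x^*\| \leq 2 d_k + M_1\,\psi(r_k)$. In parallel, inserting the lower bound $d_{k+1} \geq r_{k+1}^{\theta}/[c(1-\theta)\kappa]$ from \eqref{equ:klpoint} into sufficient decrease yields the scalar recursion $r_k - r_{k+1} \geq \tilde C\, r_{k+1}^{2\theta}$. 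For $\theta \in (0,1/2]$, one has $r_{k+1}^{2\theta} \geq r_{k+1}$ eventually, so $r_{k+1} \leq (1-\eta) r_k$ for some $\eta > 0$, whence $r_k$, $\psi(r_k)$, and $d_k \leq \sqrt{r_{k-1}/C_0}$ all decay geometrically, yielding the linear rates in (b). For $\theta \in (1/2,1)$, the classical discrete-recursion lemma applied to $r_k - r_{k+1} \geq \tilde C\, r_{k+1}^{2\theta}$ gives $r_k = O\bigl(k^{-1/(2\theta-1)}\bigr)$, hence $\psi(r_k) = O\bigl(k^{-(1-\theta)/(2\theta-1)}\bigr)$; since $1/[2(2\theta-1)] > (1-\theta)/(2\theta-1)$ for $\theta > 1/2$, the $\psi(r_k)$ term dominates $d_k$ in the tail bound, producing $\|x^k - x^*\| = O\bigl(k^{-(1-\theta)/(2\theta-1)}\bigr)$, while $F(x^k) - F^* \leq r_k$ supplies the asserted rate on the objective gap.

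The main obstacle is not any single step but the bookkeeping needed to ensure a single threshold $N$ works uniformly across the KL, relative-error, and sufficient-decrease estimates; once $z^k \to z^*$ places the iterates in the KL neighborhood, the rest is mechanical. Transferring rates from the auxiliary gap $r_k$ to the original gap $F(x^k) - F^*$ is automatic, since $F(x^k) - F^* \leq \Phi(z^k) - F^* = r_k$ by nonnegativity of the Bregman term in $\Phi$.
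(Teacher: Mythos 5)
Your proposal is correct and follows essentially the same route the paper intends: the paper omits the details and defers to the standard KL rate analysis of \cite{hedon} and \cite{Frankel2015}, which is exactly the sufficient-decrease/relative-error/KL bootstrapping argument you carry out on the auxiliary sequence $r_k=\Phi(z^k)-\Phi(z^*)$, together with the transfer $F(x^k)-F^*\leq r_k$ via nonnegativity of the Bregman term. The only point worth making explicit is that the KL inequality may be invoked only where $\Phi(z^k)>\Phi(z^*)$; if equality ever occurs, monotonicity and sufficient decrease make the sequence eventually stationary, so all three rates hold trivially in that degenerate case.
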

Since this theorem can be proved by using the same techniques as in the proofs of \cite[Theorem 2]{hedon} and \cite[Theorem 3.4]{Frankel2015}, we omit the detail of the proof. 

\section{Numerical Experiments}\label{numerical}
We evaluate DCAe with regard to the nonconvex negative matrix completion problem \eqref{MF}, namely
\begin{equation}\label{MF_exp}
    \min_{\mbfU\in\mathbb{R}^{m\times t},\mbfV\in\mathbb{R}^{t\times n}} \biggl\{ F(\mbfU,\mbfV) := f(\mbfU,\mbfV) + \chi_{\mathbb{R}^{m\times t}_+\times \mathbb{R}^{t\times n}_+}(\mbfU,\mbfV) + r(\mbfU,\mbfV)\biggr\},
\end{equation}
where $f(\mbfU,\mbfV): = \frac{1}{2}\|\mathcal P(\mbfA - \mbfU\mbfV)\|_F^2$ is the data-fitting term, $r$ is a regularization term, and $\chi_{\mathbb{R}^{m\times t}_+\times \mathbb{R}^{t\times n}_+}$ is the indicator function. Here, $\mathcal P(\mbfZ)_{ij} = \mbfZ_{ij}$ if $\mbfA_{ij}$ is observed and $0$ otherwise. We consider the exponential concave regularization \cite{Bradley1998}:
    \begin{equation}\label{exp}
        \begin{split}
        r(\mbfU,\mbfV) = \lambda\biggl(\sum_{ij}\biggl(1-\exp(-\theta|u_{ij}|)\biggr) + \sum_{ij}\biggl(1-\exp(-\theta|v_{ij}|)\biggr) \biggr),
    \end{split}
    \end{equation}
    where $u_{ij}$ are elements of $\mbfU$. We choose a function $\phi$ given by
\begin{equation*}
    \phi(\mbfU,\mbfV) = c_1\biggl(\frac{\|\mbfU\|^2_F +\|\mbfV\|^2_F}{2}\biggr)^2 + c_2\biggl(\frac{\|\mbfU\|^2_F +\|\mbfV\|^2_F}{2}\biggr),
\end{equation*}
where $c_1 = 3$ and $c_2 = \|P_O(\mbfA)\|_F$. 
In \cite{mukbey}, the authors showed that $L\phi - f$ and $L\phi + f$ are convex for all $L\geq 1$. The exponential concave regularization can be expressed as a DC function $r = \lambda\theta\biggl(\|\mbfU\|_1 + \|\mbfV\|_1\biggr) - h$, where  $h$ is defined by
\begin{equation*}
    \begin{split}
        h(\mbfU,\mbfV) =  \lambda\theta\biggl(\|\mbfU\|_1 + \|\mbfV\|_1\biggr) - r(\mbfU,\mbfV).
    \end{split}
\end{equation*}
Therefore, the problem \eqref{MF_exp} takes the form of \eqref{model} with $g = \chi_{\mathbb{R}^{m\times t}_+\times \mathbb{R}^{t\times n}_+}(\mbfU,\mbfV) + \lambda\theta\biggl(\|\mbfU\|_1 + \|\mbfV\|_1\biggr)$. 
The DCAe algorithm iteratively computes $\xi^k\in \partial h(\mbfU^k,\mbfV^k)$ by
\begin{equation*}
\begin{split}
     \xi^k_{u_{ij}} = \lambda\theta(1-\exp(-\theta|u_{ij}^k|))\text{sign}(u_{ij}^k)\\
     \xi^k_{v_{ij}} = \lambda\theta(1-\exp(-\theta|v_{ij}^k|))\text{sign}(v_{ij}^k),
\end{split}
\end{equation*}
and solves the following convex nonsmooth problem:
\begin{equation}\label{subMF1}
\begin{split}
    \min_{\mbfU\in\mathbb{R}^{m\times t}_+,\mbfV\in\mathbb{R}^{t\times n}_+}\biggl\{\lambda/L(\|\mbfU\|_1 + \|\mbfV\|_1) + \langle\mbfP^k, \mbfU\rangle + \langle\mbfQ^k, \mbfU\rangle\\ +
    c_1\biggl(\frac{\|\mbfU\|^2_F +\|\mbfV\|^2_F}{2}\biggr)^2 + c_2\biggl(\frac{\|\mbfU\|^2_F +\|\mbfV\|^2_F}{2}\biggr)\biggr\},
\end{split}
\end{equation}
where $\mbfP^k$ and $\mbfQ^k$ are defined by
\begin{equation*}
    \begin{split}
        \begin{split}
        \mbfP^k = \nabla_U f(\mbfY^k,\mbfZ^k)/L - \xi^k_U/L- \nabla_U\phi(\mbfY^k,\mbfZ^k)\\
        \mbfQ^k  = \nabla_V f(\mbfY^k,\mbfZ^k)/L - \xi^k_V/L - \nabla_U\phi(\mbfY^k,\mbfZ^k),
    \end{split}
    \end{split}
\end{equation*}
with $\mbfY^k = \mbfU^k +\beta_k(\mbfU^k - \mbfU^{k-1})$ and  $\mbfZ^k = \mbfV^k +\beta_k(\mbfV^k - \mbfV^{k-1})$. 
The solution to the convex sub-problem \eqref{subMF1} is given by $\mbfU^{k+1} = \tau^*[-\mbfP^k - \lambda/L]_+$ and $\mbfV^{k+1} = \tau^*[-\mbfQ^k - \lambda/L]_+$, where $([-\mbfP -  \lambda/L]_+)_{ij} = \max(0,-p_{ij} - \lambda/L)$ and $\tau^*$ is the unique positive real root of
\begin{equation*}
    c_1\biggl(\|[-\mbfP^k - \lambda/L]_+\|^2_F + \|[-\mbfQ^k - \lambda/L]_+\|^2_F\biggr)\tau^3 + c_2\tau - 1 = 0.
\end{equation*}
In our experiments, we choose $L = l = 1$ and $\delta = 0.9999$. At each iteration, we initialize $\beta_k = \frac{\mu_k-1}{\mu_k}$, and while the inequality \eqref{equ:beta} does not hold, we decrease $\beta_k$ by $\beta_k = 0.9\beta_k$, where $\mu_k = \frac{1}{2}(1+\sqrt{1+4\mu_{k-1}^2})$ and $\mu_0=1$. 

To verify the effect of the extrapolation term, we compare our DCAe algorithm with its non-extrapolated version, which is DCA, and the inertial DCA algorithm (iDCA) \cite{aniwel}.
\begin{table}[tbh!]
\centering
\caption{Dataset Descriptions. The number of users, items, and ratings used in each dataset}\label{dataset}
\begin{tabular}{@{}cllll@{}}
\hline
\multicolumn{2}{l}{Dataset}       & \multicolumn{1}{l}{\#users} & \multicolumn{1}{l}{\#items} & \multicolumn{1}{l}{\#ratings} \\ 
\hline
\multirow{2}{*}{MovieLens}  & 1M  & 6,040                       & 3,449                       & 999,714                       \\
                            & 10M & 69,878                      & 10,677                      & 10,000,054                    \\
\multicolumn{1}{l}{Netflix} &     & 480,189                     & 17,770                      & 100,480,507                   \\ \hline
\end{tabular}
\end{table}
All the experiments were conducted on a PC 2.3 GHz Intel Core i5
of 8GB RAM. The codes were written in MATLAB and are available from \url{https://github.com/nhatpd/DCAe}.

We set $\lambda = 0.1$ and $\theta = 5$ as proposed in \cite{Bradley1998}. We note that we do not optimize numerical results by tweaking the parameters as this is beyond the scope of this work. 
It is important noting that we evaluate the algorithms on the same models. 
We carried out the experiments on the two most widely used datasets in the field of recommendation systems, MovieLens and Netflix, which contain ratings of different users. 
The characteristics of the datasets are given in Table~\ref{dataset}.
We respectively choose $t = 5, 8$, and $13$ for MovieLens 1M, 10M, and Netflix data set. We  randomly used 70\% of the observed ratings for training and the rest for testing. The process was repeated twenty times. We run each algorithm 20, 200, and 3600 seconds for MovieLens 1M, 10M, and Netflix data set, respectively. 
We are interested in the root mean squared error on the test set: $RMSE = \sqrt{\|\mathcal P_T(A - UV)\|^2/N_T}$, where $\mathcal P_T(Z)_{ij} = Z_{ij}$ if $A_{ij}$ belongs to the test set and $0$ otherwise, $N_T$ is the number of ratings in the test set.
 We plotted the curves of the average value of RMSE and the objective function value (log scale) versus training time in Figure \ref{fig:MCP} and report the average and the standard deviation of RMSE and the objective function value in Table~\ref{results}. 

\begin{figure*}[ht]
\begin{center}
\begin{tabular}{cc}
\includegraphics[width=0.49\textwidth]{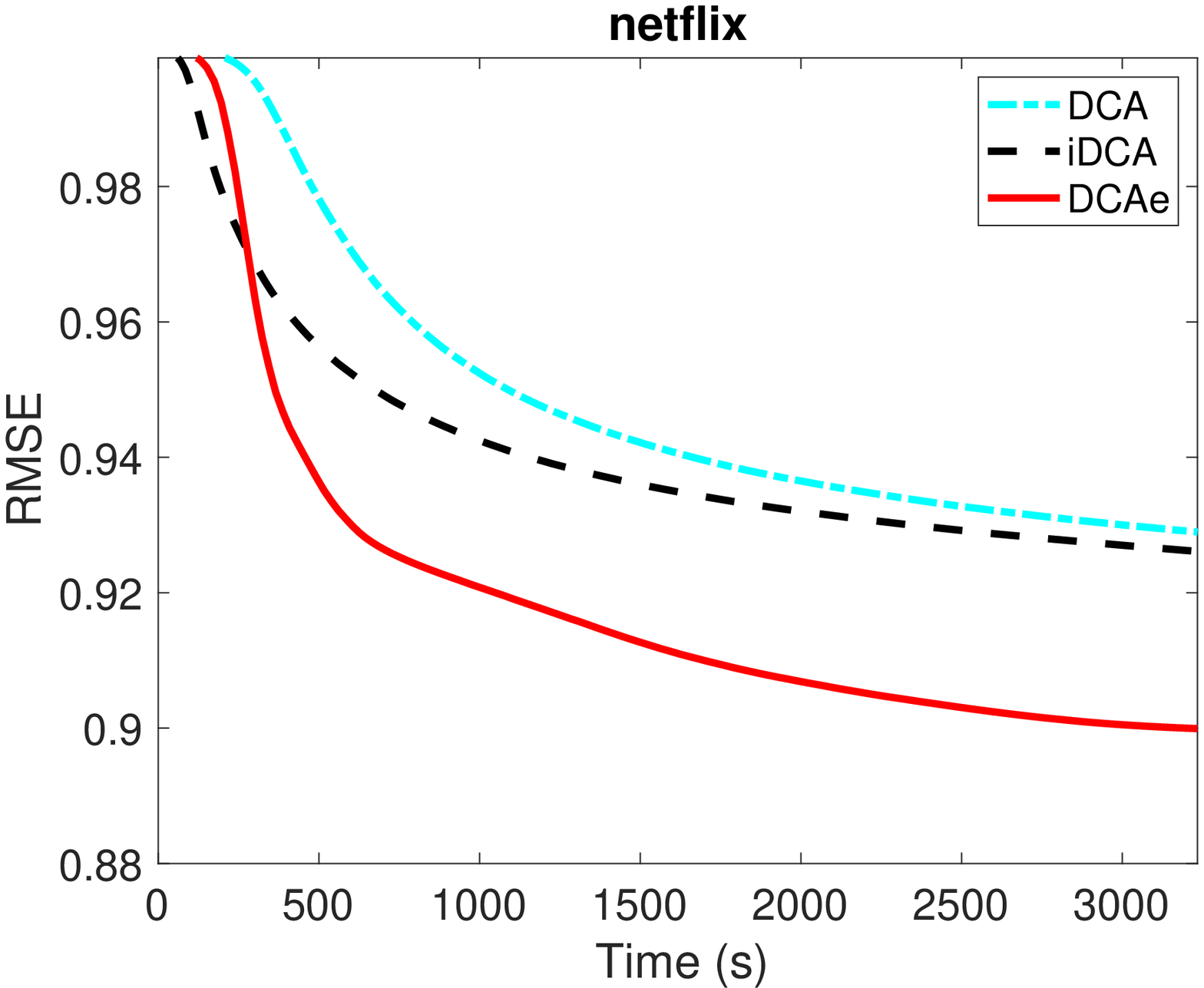}  & 
\includegraphics[width=0.49\textwidth]{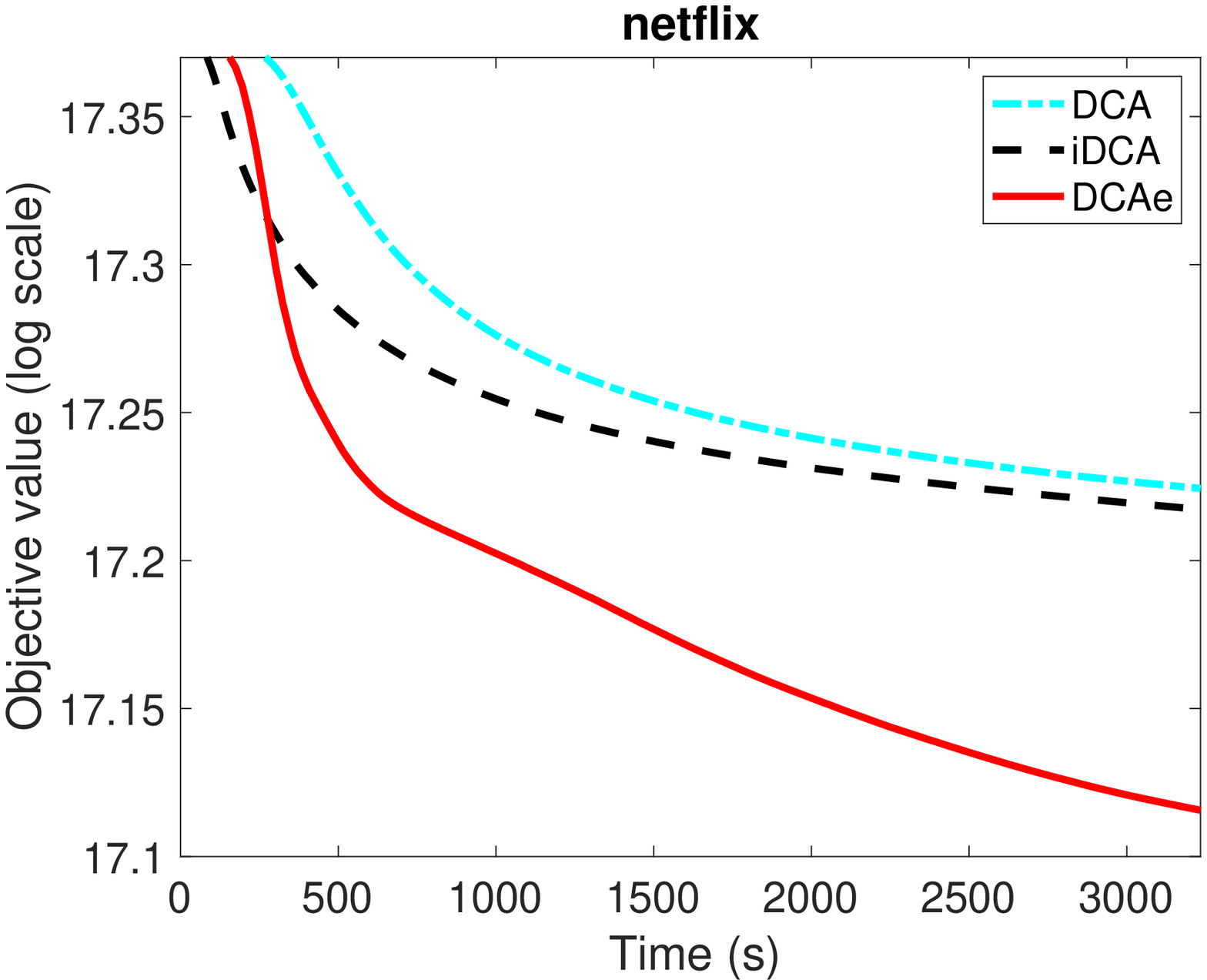} \\
\includegraphics[width=0.49\textwidth]{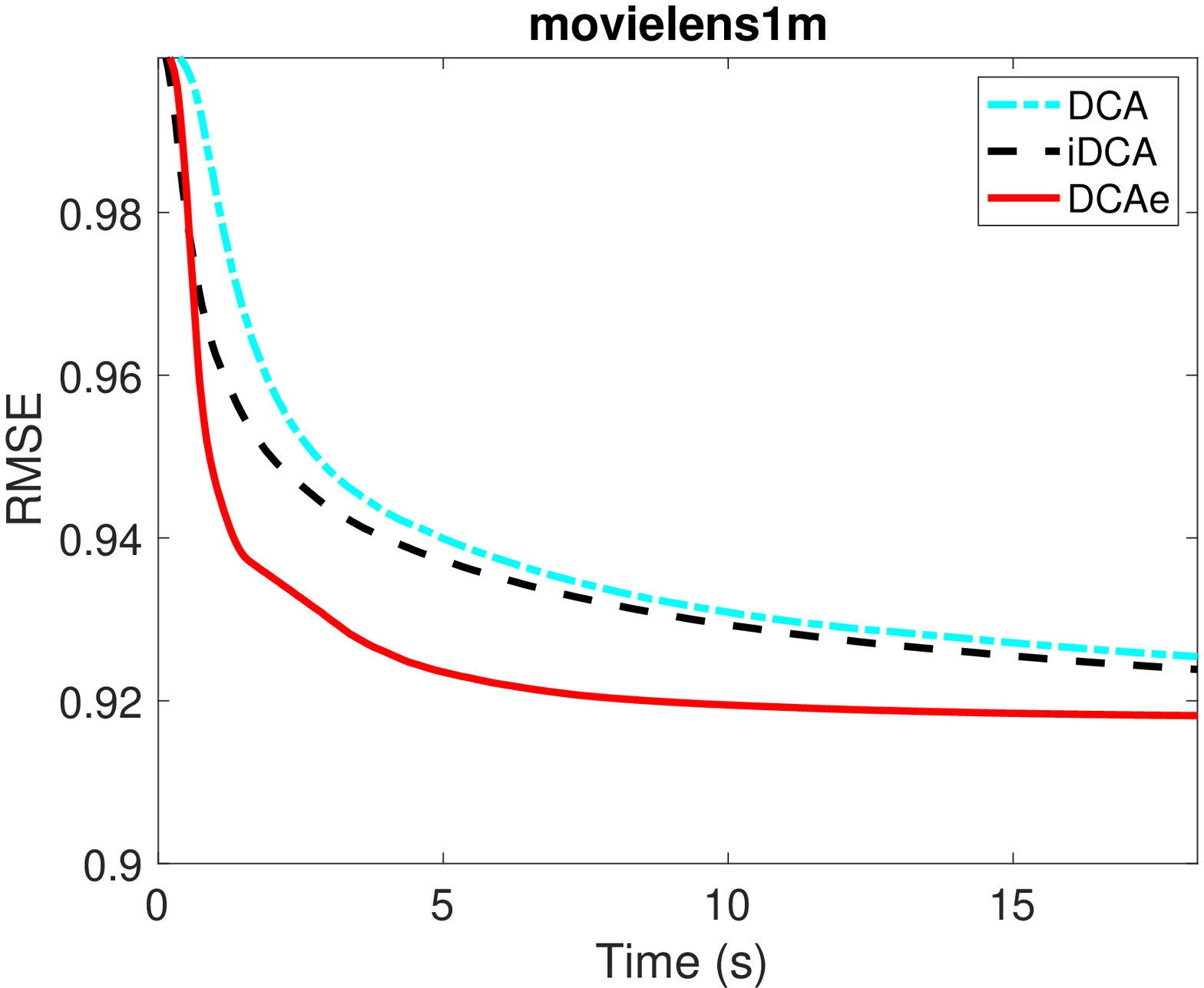}  & 
\includegraphics[width=0.49\textwidth]{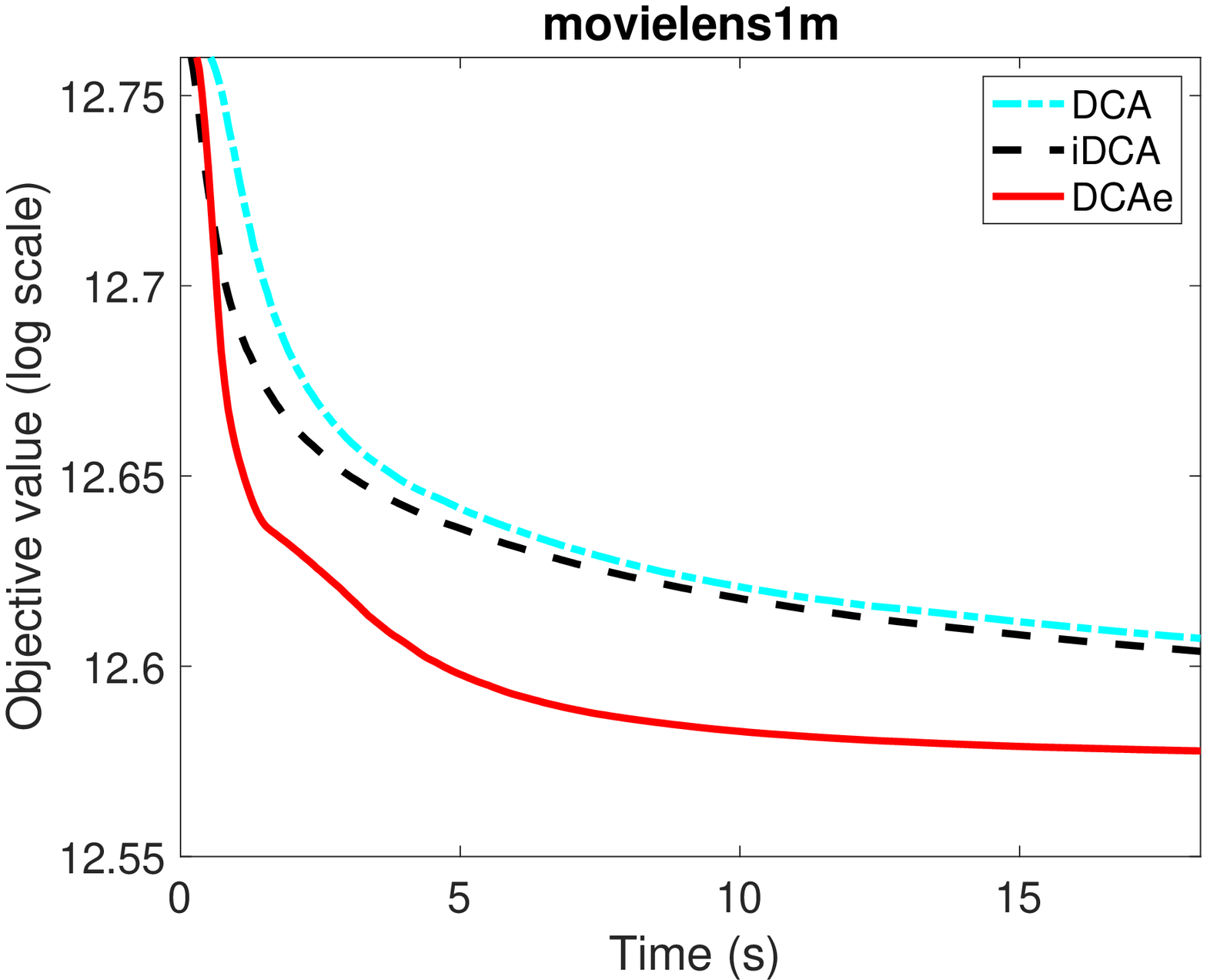} \\
\includegraphics[width=0.49\textwidth]{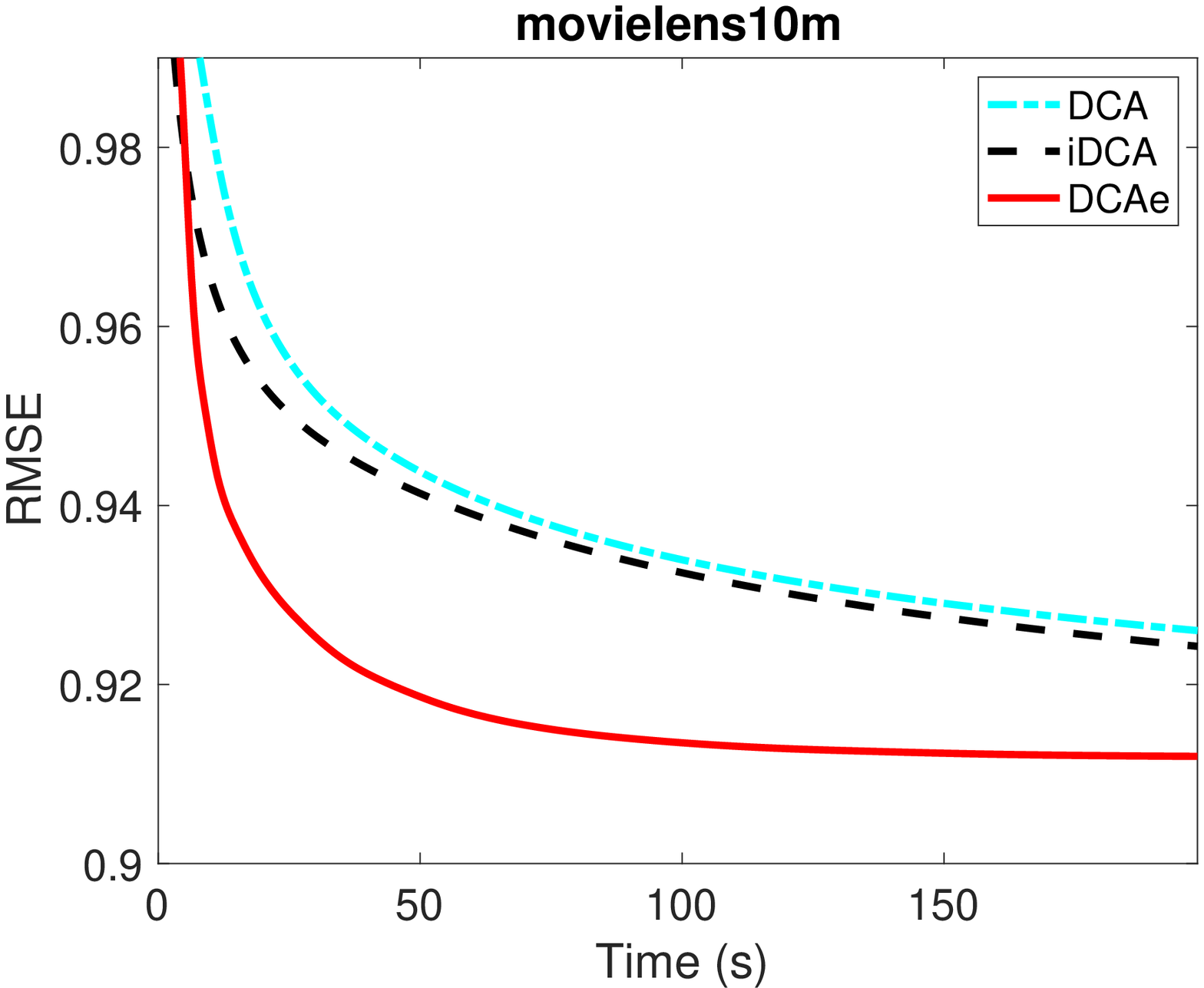}  & 
\includegraphics[width=0.49\textwidth]{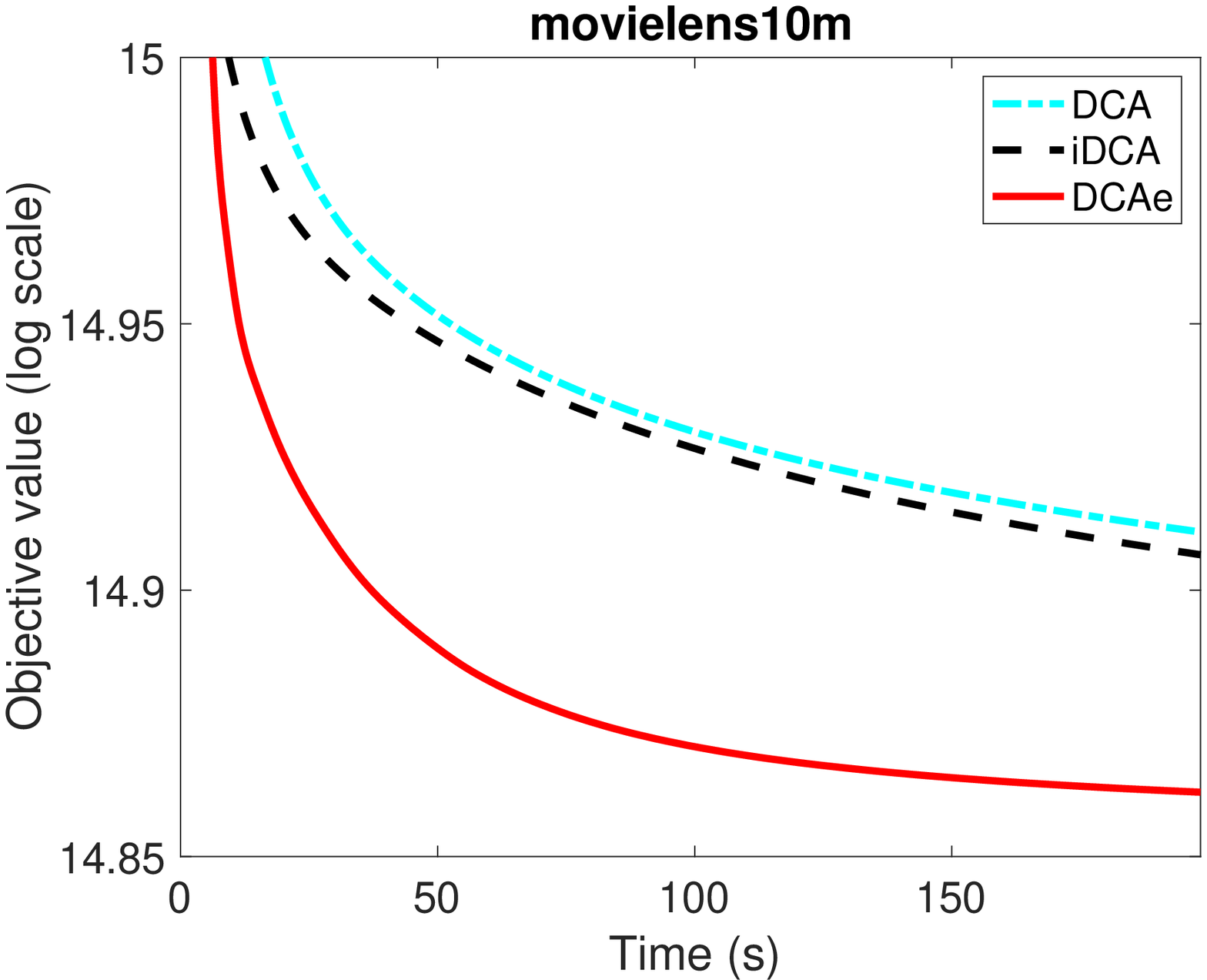}
\end{tabular}
\caption{Evolution of the average value of the RMSE on the test set and the objective function value with respect to time 
\label{fig:MCP}} 
\end{center}
\vspace{-0.2in}
\end{figure*}

\begin{table}[tb]
\centering
\caption{Bold values indicate the best results for each dataset.}\label{results}
\begin{tabular}{@{}clll@{}}
\hline
\multirow{2}{*}{Dataset}      & \multirow{2}{*}{Method} & RMSE & Objective value \\
& & mean $\pm$ std & (mean $\pm$ std)$\times 10^{-5}$ \\ 
 \hline
  \multirow{3}{*}{MovieLens 1M}   & DCA &  0.9247 $\pm$   0.0007  & 2.9814 $\pm$   0.0041 \\
  & iDCA &  0.9231  $\pm$  0.0003  & 2.9715 $\pm$   0.0033\\
  
 & DCAe  & \textbf{0.9181}  $\pm$  0.0007 & \textbf{2.8989} $\pm$   0.0030\\
  \hline
  \multirow{3}{*}{MovieLens 10M} & DCA  & 0.9259    $\pm$ 0.0005  &  29.8973  $\pm$  0.0342\\
  & iDCA  & 0.9242  $\pm$  0.0006  & 29.7692 $\pm$   0.0418\\
  
  & DCAe & \textbf{0.9120}  $\pm$  0.0003 & \textbf{28.4760}   $\pm$ 0.0197\\
  
\hline
  \multirow{3}{*}{Netflix} & DCA & 0.9275 $\pm$   0.0006  &  301.2471  $\pm$  0.4495\\
  & iDCA  & 0.9247  $\pm$  0.0009 & 299.0971 $\pm$   0.7649\\
  
  & DCAe & \textbf{0.8996} $\pm$ 0.0004 & \textbf{269.2313}  $\pm$  1.3008\\
  
\hline
\end{tabular}
\vspace{-0.1in}
\end{table}

We observe that DCAe converges the fastest on all the data sets, providing a significant acceleration of DCA and iDCA. DCAe obtains not only the best final objective function values but also the best RMSE on the test set. 
This illustrates the usefulness of the extrapolation term in DCA.  

\section{Conclusions}\label{conclu}
We developed the DCAe algorithm for minimizing the sum of a nonconvex differentiable function and a DC function, a novel variant of DCA with extrapolation that subsumes the pDCAe algorithm in \cite{wenapr} as a special case. We studied the sub-sequential and global convergence of DCAe to a critical point under mild and stronger conditions, respectively. To evaluate the performance of our proposed algorithm, we applied DCAe to the nonnegative matrix completion problem . The numerical results strongly confirmed the advantage of DCAe to DCA and iDCA.

\bibliography{DCAe}

\bibliographystyle{abbrv}

\end{document}